\newcommand\Z{{\mathbb Z}}
\newcommand\inv{^{-1}}
\newcommand{\CC}{\mathcal C}
\newcommand{\GG}{\mathcal G}
\newcommand{\ga}{\gamma}
\newtheorem{theorem}{Theorem}[section]
\newtheorem{lemma}[theorem]{Lemma}
 \newtheorem{remark}[theorem]{Remark}
\begin{document}
\title[Automata groups generated by step-2 nilpotent groups]{Automata groups generated by Cayley machines of groups of nilpotency class two}
\author{Ning Yang}

\maketitle
\begin{abstract}We show presentations of automata groups generated by Cayley machines of finite groups of nilpotency class two and these automata groups are all cross-wired lamplighters.
\end{abstract}

\section{Introduction}

Automata groups gain significance since they provide interesting examples in geometric group theory and related fields, for instance, the celebrated Grigorchuk group \cite{G0,G}.

Grigorchuk and \.{Z}uk \cite{GZ} showed that the lamplighter group $\Z/2\Z\wr\Z$ can be constructed as the automata group of a 2-state automaton. Using this automaton, they computed the spectra associated to random walks on the lamplighter group and found a counterexample of the strong form of the Atiyah conjecture. The 2-state automaton is invertible and called the Cayley machine of $\Z/2\Z\wr\Z$. Cayley machine
seems to have first been considered by Krohn and Rhodes in \cite{KR1,KR2,KRT}, see also \cite{Eilenberg}. In fact, each finite group has its associated Cayley machine. Silva and Steinberg \cite{SiSt} showed that, for a finite abelian group $F$, the lamplighter group $F \wr\Z$ can be generated by the Cayley machine of $F$. 

However, for finite nonabelian groups, automata groups generated by their Cayley machines do not embed in a wreath product of a finite group with a torsion free group \cite{KSS, Yang}. Pochon \cite{Pochon} first studied the Cayley machine of a finite nonabelian group and found the automata group structure for the dihedral group of order $8$. The author \cite{Yang} gave the automata group presentations for finite step-2 nilpotent groups with central squares. 

We study the automata groups generated by Cayley machines of finite step-2 nilpotent groups. Here is our main theorem.

\begin{theorem}\label{main}
	The automata group generated by the Cayley machine of a finite group $G$ of nilpotency class two has the following presentation: $$\langle x,G  \ |\  [ x^ngx^{-n}, \ h]=\prod\limits_{j=1}^{n}x^j[g\inv,h^{a_{nj}}]x^{-j},n\in\Z^+,g,h\in G\rangle,$$ where $[g ,h]=g\inv h\inv gh$, $a_{ij}=(-1)^{i-j-1}2^{2j-i}C_{j-1}^{i-j-1}-(-1)^{i-j}2^{2j-i-1}C_{j-1}^{i-j}$ and $C_l^m$ are binomial coefficients for $0\leq m\leq l$ and $0$ otherwise.
\end{theorem}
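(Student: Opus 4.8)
The plan is to realise $\GG$ explicitly through the wreath recursion of the Cayley machine and then to play a presentation-side analysis off against a tree-side analysis. Recall that in state $g$ the machine reads $h\in G$, outputs $gh$ and passes to state $gh$, so the state $g$ defines the automorphism $A_g$ of the rooted tree $G^{\omega}$ with $A_g(hw)=(gh)A_{gh}(w)$, and $\GG=\langle A_g\mid g\in G\rangle$. Set $x:=A_e$ and $\lambda_g:=A_e^{-1}A_g$. Reading off the recursion, $A_e$ fixes the first letter and has section $A_h$ below $h$, while $\lambda_g(hw)=(gh)w$; hence $g\mapsto\lambda_g$ is an injective homomorphism $G\hookrightarrow\GG$, we have $A_g=x\lambda_g$, and $\GG=\langle x,\lambda(G)\rangle$. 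These are to be the images of the generators $x$ and $G$ of the presentation; writing $\Ga$ for the group defined by the presentation, the theorem is the assertion that the resulting surjection $\Ga\twoheadrightarrow\GG$ is an isomorphism.

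I would first check that the listed relations hold in $\GG$. A short computation gives $x^{n}\lambda_g x^{-n}$ first-letter action $\ell_g$ (left translation by $g$), independently of $n$, with sections that are themselves conjugates of elements of $\lambda(G)$ by powers of $x$; so the first-letter action of $[x^{n}\lambda_g x^{-n},\lambda_h]$ is $\ell_{[g,h]}$, which agrees with that of the right-hand side since $[g^{-1},h^{a}]\in[G,G]$. From here nilpotency class two is the essential input: it makes $[G,G]$ central, hence $[g^{-1},h]=[g,h]^{-1}$, $[g,h^{a}]=[g,h]^{a}$ and $[g^{-1},h^{-1}]=[g,h]$, and these identities are what allow the right-hand side to be rewritten as a product of $x$-conjugates of the single element $\lambda_{[g,h]}$. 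Equality of all sections of the two sides is then proved by induction on $n$: substituting the recursion for $x^{n}\lambda_g x^{-n}$ into the commutator expresses $[x^{n}\lambda_g x^{-n},\lambda_h]$ below each first letter in terms of commutators $[x^{m}\lambda_{g'}x^{-m},\lambda_{h'}]$ with $m<n$, to which the inductive hypothesis applies; comparing the $x^{j}$-conjugates that occur forces the two-term recursion whose solution is the stated alternating sum of binomial coefficients. (In the smallest case $a_{11}=-1$ and the relation reads $[x\lambda_g x^{-1},\lambda_h]=x\lambda_{[g,h]}x^{-1}$.) This step is lengthy but mechanical and yields the surjection $\Ga\twoheadrightarrow\GG$.

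The remaining and main task is to show this surjection is injective. On the presentation side, the relations let one move every occurrence of $x$ to the front, so $\Ga=\bar N\rtimes\langle x\rangle$ with $\bar N$ the normal closure of $G$ (the retraction $x\mapsto 1$, $G\mapsto 0$ onto $\Z$ respects all relations, so $\langle x\rangle$ is infinite cyclic); moreover, because $[g^{-1},h^{a}]$ is central in $G$, the displayed relations force every commutator $[x^{i}gx^{-i},x^{j}hx^{-j}]$ into the subgroup $Z$ generated by the conjugates $x^{p}[G,G]x^{-p}$, and — again from the relations, using that central elements of $G$ have trivial commutators with all of $G$ — $Z$ is central in $\bar N$. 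Hence each $\bar N_k:=\langle x^{i}gx^{-i}\mid 0\le i\le k\rangle$ is finitely generated and nilpotent of class at most two, with all generators of finite order and finite abelian derived subgroup, so $\bar N_k$ is finite and $\bar N$ is locally finite; this exhibits $\Ga$ — and $\GG$, once injectivity is known — as a (locally finite)-by-$\Z$ group, i.e.\ a cross-wired lamplighter, and reduces injectivity to the injectivity of each $\bar N_k\to\GG$. For that I would extract a normal form for the elements of $\bar N_k$, compute $|\bar N_k|$ from the presentation, and then, using that the sections of the $A_g$ run over all generators and the explicit first-letter actions of the conjugates $x^{i}\lambda_g x^{-i}$, show that $\bar N_k$ already acts in $|\bar N_k|$ distinct ways on a sufficiently deep finite level $G^{N}$ of the tree; as this count cannot exceed $|\bar N_k|$, the map is bijective on each $\bar N_k$, hence on $\bar N$, hence $\Ga\cong\GG$. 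The delicate points — the real content — are the normal form for $\bar N$, where the combinatorics of the coefficients $a_{nj}$ must be reconciled with the relations among the conjugates of $[G,G]$, and the verification that the tree action remains faithful on every $\bar N_k$; the rest is bookkeeping with the wreath recursion.
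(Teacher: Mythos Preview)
Your overall architecture is the paper's: verify the relations by induction on $n$ in wreath coordinates, then prove the surjection $\Gamma\twoheadrightarrow\GG(\CC)$ is injective by controlling a normal form on the torsion subgroup. Your remark that the section recursion forces a two-term recurrence on the $a_{nj}$ is also how the paper identifies the closed formula: it first simplifies the defining recursion to $a_{ij}=2a_{i-1,j-1}-a_{i-2,j-1}$ and then checks the binomial expression against that.

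The divergence is in the injectivity step, and there you are making your life harder than necessary. You propose to bound $|\bar N_k|$ from the presentation and then exhibit that many distinct actions on a deep enough level $G^N$. But the normal-form rewriting only gives $|\bar N_k|\le |G|^{k+1}$; to finish you must still show that distinct normal forms act differently on the tree, i.e.\ that the image already has $|G|^{k+1}$ elements. The paper does precisely this, directly and in a few lines, via \emph{depth}: for $m\ge 0$ the element $x^m f x^{-m}$ has depth $m{+}1$ on $G^\omega$, so in a product $f_1\,(x^{m_2}f_2x^{-m_2})\cdots(x^{m_j}f_jx^{-m_j})$ with $0<m_2<\cdots<m_j$ and $f_j\ne 1$, the last factor alone disturbs level $m_j{+}1$, hence the product is nontrivial. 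That single observation replaces your whole counting scheme, and it makes your structural analysis of $\bar N$ inside the abstract $\Gamma$ (class-two nilpotent, locally finite) unnecessary for the theorem --- the paper simply cites Silva--Steinberg for $\GG(\CC)=N\rtimes\langle x\rangle$ with $N$ locally finite and works entirely on the automata-group side.

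One small overreach: being (locally finite)-by-$\Z$ is not the definition of a cross-wired lamplighter; the paper treats that conclusion as a separate theorem via the Cornulier--Fisher--Kashyap criterion, so that clause of your outline claims more than it proves, though it is tangential to the presentation result.
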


All conjugates in the products on the right hand side of the relations are central in the automata group, since $G$ is step-2 nilpotent. The proof of the theorem is by comparing wreath product coordinates.

Cayley automata groups generalizes lamplighter groups $F \wr\Z$ if $F$ is finite abelian.  For nonabelian $F$, $F \wr\Z$ can never be automata groups, because automata groups are always residually finite \cite{GNS} but not the other \cite{Gru}. 

Lamplighter groups are always cocompact lattices in the isometry groups of Diestel-Leader graphs. Indeed, Eskin, Fisher, and Whyte \cite{EFW1, EFW2, EFW3} proved  that a finitely generated group is quasi-isometric to a
lamplighter group if and only if it acts properly and cocompactly by
isometries on some Diestel-Leader graph. Cornulier, Fisher and Kashyap \cite{CFK} call these lattices \textit{cross-wired lamplighter groups}. They also showed that cross-wired lamplighters are not necessarily lamplighters.

It is suggested by Bartholdi that automata groups associated to some finite groups might provide examples of cross-wired lamplighters. We show that the automata groups in Theorem \ref{main} are cross-wired lamplighter groups, by the algebraic characterization of cross-wired lamplighters given in \cite{CFK}.

\begin{theorem}\label{cwl}
	The automata groups generated by Cayley machines of finite step-2 nilpotent groups are cross-wired lamplighter groups.
\end{theorem}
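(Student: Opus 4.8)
The plan is to verify, for the automata group $\GG$ with the presentation of Theorem~\ref{main}, the algebraic characterization of cross-wired lamplighter groups obtained in \cite{CFK}. Let $\pi\colon\GG\to\Z$ be the homomorphism with $\pi(x)=1$ and $\pi(g)=0$ for all $g\in G$, and set $N=\Ker\pi$. Since $\pi$ restricts to an isomorphism on $\langle x\rangle$ and $N\trianglelefteq\GG$, we get $\GG=N\rtimes\langle x\rangle$, and by Reidemeister--Schreier $N$ is generated by the conjugates $x^{n}gx^{-n}$ with $n\in\Z$ and $g\in G$. This $N$ is to play the role of the lamp group and $x$ that of the shift.

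First I would show that $N$ is locally finite. Every commutator of two generators of $N$ is central in $\GG$: indeed $[x^{n}gx^{-n},h]$ equals the right-hand side of the corresponding relation in Theorem~\ref{main}, which is central because $G$ is of class two, and a general $[x^{n}gx^{-n},x^{m}hx^{-m}]$ is a conjugate of such an element. Hence $[N,N]$ is central and $N$ is nilpotent of class at most two. Each generator $x^{n}gx^{-n}$ has finite order, so $N/[N,N]$ is torsion; and each generating commutator is, by Theorem~\ref{main}, a finite product of conjugates of commutators of $G$ and hence torsion, these being central and so pairwise commuting, so $[N,N]$ is torsion too. A central extension of a torsion group by a torsion group is torsion, so $N$ is torsion; being also locally nilpotent it is locally finite. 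Writing $N_{k}$ for the finite subgroup generated by those $x^{n}gx^{-n}$ with $|n|\le k$, we have $N=\bigcup_{k}N_{k}$ and $xN_{k}x^{-1}\subseteq N_{k+1}$.

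Next I would assemble the horospherical data demanded by \cite{CFK}. Quotienting by the central torsion subgroup $[N,N]\trianglelefteq\GG$ kills every right-hand side of Theorem~\ref{main} (a product of conjugates of commutators of $G$) as well as $[G,G]$, while turning each left-hand side into $[x^{n}\bar{g}x^{-n},\bar{h}]$; one then reads off $\GG/[N,N]\cong G^{\mathrm{ab}}\wr\Z$, an honest lamplighter over the finite abelian group $G^{\mathrm{ab}}$. So $\GG$ is a central extension $1\to[N,N]\to\GG\to G^{\mathrm{ab}}\wr\Z\to1$ by a locally finite abelian group, and the cocycle is ``locally supported'': the commutator $[x^{n}gx^{-n},x^{m}hx^{-m}]$ only involves the positions strictly between $m$ and $n$. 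With this I would introduce the one-sided tails of $N$ --- those generated by the $x^{n}gx^{-n}$ with $n$ bounded above, respectively below --- verify that $x$ expands one family and contracts the other, that $N$ is an almost-product of two complementary tails whose ``overlap'' is finite, and that each tail is itself locally finite; this supplies exactly the data (including the valences of the two trees, read off from these indices) to which the algebraic criterion of \cite{CFK} applies.

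The hard part will be reconciling the cross-wiring with the horocyclic-product structure. In contrast with the plain lamplighter, the conjugates $x^{n}gx^{-n}$ do not commute; their commutators are the nontrivial central elements $\prod_{j=1}^{n}x^{j}[g\inv,h^{a_{nj}}]x^{-j}$ (and translates thereof) appearing in Theorem~\ref{main}. I must show that this $2$-cocycle on $N$, valued in the central torsion subgroup and governed by the explicit coefficients $a_{nj}$, is bounded in just the way needed so that it neither breaks the equivariance of the two tree actions nor inflates their valences beyond finiteness, keeping the action on the Diestel--Leader graph proper and cocompact. Establishing this controlled behaviour of the cocycle is the technical core; the remaining work is bookkeeping in the wreath-product coordinates already used to prove Theorem~\ref{main}.
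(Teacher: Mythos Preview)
Your approach is exactly the paper's: its entire proof is the single sentence ``The proof is almost the same as the proof of theorem~1.4 in \cite{Yang},'' and that earlier argument is precisely the verification of the Cornulier--Fisher--Kashyap algebraic criterion via the decomposition $\GG(\CC)=N\rtimes\langle x\rangle$ together with one-sided tail subgroups of $N$.

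Two remarks on the execution. First, local finiteness of $N$ is already recorded in Section~2 (from \cite{SiSt}, and for Cayley machines of \emph{arbitrary} finite groups), so your class-two nilpotency argument, while correct, is unnecessary here. Second, you explicitly leave the ``hard part'' open, worrying that the central $2$-cocycle might spoil the tree actions or inflate valences. This overstates the difficulty. The CFK criterion is purely group-theoretic; one never touches the trees directly. Beyond local finiteness, the only substantive checks concern the commensurated one-sided subgroups, and the relations of Theorem~\ref{main} give exactly what is needed: the commutator of generators at positions $m<n$ is a product of central elements supported at positions $m+1,\dots,n$ (note that position $n$ genuinely occurs, since $a_{nn}=-2^{\,n-1}\neq0$ by Lemma~\ref{neg1}; your ``strictly between $m$ and $n$'' is a small slip). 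Combined with the normal form of Lemma~\ref{form}, this is precisely the input \cite{Yang} uses, and nothing in that verification relies on the extra ``central squares'' hypothesis there --- it carries over verbatim once Theorem~\ref{main} provides relations of the same shape. In short, what you call the ``technical core'' is in fact the bookkeeping you already anticipated, not a new estimate.
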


\section{Notations}

Let $G=\{g_1=1,g_2,g_3,\cdots,g_k\}$ be a step-2 nilpotent group of order $k$. Let $\mathcal C(G)$ be the Cayley machine associated to $G$, i.e., the 
automaton with state $G$ and
alphabet $G$. Both the transition and the output functions are 
group multiplication, i.e., at state $g_i$ on input $g$ the
machine goes to state $g_ig$ and outputs $g_ig$. 

For simplicity, we write $\CC$ as $\CC (G)$.  Then $\CC$ is invertible. Its inverse $\CC\inv$ is a reset automaton with states and input alphabet
$G$, where at state $g_i$ on input $g$ the automaton goes to
state $g$ and outputs $g_i\inv g$. 

Let $\GG(\CC)$ be the automata group generated by $\CC$, then $$\GG(\CC)=<\CC_{g_1},\CC_{g_2},\cdots,\CC_{g_k}>=\GG(\CC\inv).$$ $\GG(\CC)$ acts on rooted $k$-tree or the set of right infinite words $G^{\omega}$ by automorphisms and has an embedding $\GG(\CC)\hookrightarrow G\wr \GG (\CC)$. In wreath product coordinates:
$$\CC_g = g(\CC_{gg_1},\cdots,\CC_{gg_k}),\  \CC\inv_g=g\inv(\CC\inv_{g_1},\cdots,\CC\inv_{g_k}).$$ 

Let $x=\CC\inv_{g_1}=\CC\inv_{1}.$ Since $$x\CC_g=1\inv(\CC\inv_{g_1},\cdots,\CC\inv_{g_k})g(\CC_{gg_1},\cdots,\CC_{gg_k})= g(\CC_{gg_1}\inv\CC_{gg_1},\cdots,\CC_{gg_k}\inv\CC_{gg_k})=g(1,\cdots,1),$$ we can identify $G$ with a subgroup of $\GG(\CC)$ via 
$g\mapsto x\CC_g.$ Then $\CC\inv_g=g\inv x, \CC_g=x\inv g.$

Moreover,  $x^n g x^{-n}=g\Big(\CC^{-n}_{gg_1}\CC^{n}_{g_1}, \cdots,\CC^{-n}_{gg_k}\CC^{n}_{g_k}\Big)$ in wreath product coordinates. 

Let
$$N:=\langle x^nGx^{-n}\mid n\in \Z\rangle.$$
It is shown in \cite{SiSt} that $x$ has infinite order, $N$ is a
locally finite group and $$\GG(\CC) = N\rtimes\langle x\rangle,$$
where $x$ acts on $N$ by conjugates.

The \emph{depth} of an element
$\ga\in \GG(\CC)$ \cite{SiSt} is the least integer $n$ (if it exists, otherwise infinity) so that
$\ga$ only changes the first $n$ letters of a word in $G^{\omega}$. $x^ngx^{-n}$ has depth  $n+1$ for $n\geq 0$ and infinity for $n<0$.

For more details on Cayley machines and automata groups, please see \cite{BGS,Rhodes,SiSt,GZ,KSS}.

To prove the main theorem, we need following lemmas to determine the powers $a_{ij}$ in commutators.

We now define $a_{ij}$ recursively. Let $a_{ij}\in \Z,i,j\in \Z^+.$ 

Suppose  $$a_{11}=-1,a_{21}=1,$$ $$a_{1j}=0, \ \ j>1,$$ $$a_{i1}=0,\ \ i>2,$$ and $$a_{ij}=\sum\limits_{l=1}^{i-1}a_{l,j-1}-\sum\limits_{l=1}^{i-1}a_{l,j}+a_{i-1,j-1},\ \  i>1,j>1.$$
Easily we see $a_{ij}$ are well defined uniquely.

If we consider $(a_{ij})$ as an infinite matrix, then the following table gives the $11\times 11$ upper left block in the matrix. We leave it blank if the element is $0$. For example, $a_{53}=-5,a_{35}=0$. 

\begin{table}[h]
	\begin{center}
		\begin{spacing}{1.36}
			\caption{ Block $(a_{ij}),1\leq i,j\leq 11$:}

			\begin{tabular}{|c|c|c|c|c|c|c|c|c|c|c|}		
				\hline
				-1 &&&&&&&&&& \\
				\hline
				1& -2 &&&&&&&&&\\
				\hline
				&3 &-4 &  &  &&&&&& \\
				\hline
				& -1 & 8 & -8 &&&&&&&\\
				\hline
				&  & -5 & 20 & -16& &&&&&\\
				\hline
				&  & 1 & -18 & 48 &-32&&&&&\\
				\hline
				&  & & 7 & -56 &112&-64&&&&\\
				\hline
				&  & & -1 & 32 &-160&256&-128&&&\\
				\hline
				&  & &  & -9 &120&-432&576&-256&&\\
				\hline
				&  & &  & 1 &-50&400&-1120&1280&-512&\\
				\hline
				&  & &  &  &11&-220&1232&-2816&2816&-1024\\
				\hline
			\end{tabular}
		\end{spacing}
	\end{center}
\end{table}

In particular, the automata group in Theorem \ref{main} has one relation as:
$$[ x^7gx^{-7}, \ h]=x^4[g\inv,h^7]x^{-4}x^5[g\inv,h^{-56}]x^{-5}x^6[g\inv,h^{112}]x^{-6}x^7[g\inv,h^{-64}]x^{-7}.$$

\begin{lemma}\label{coeff}
	 For $i>1,j>1$, 	
	\begin{equation}\label{coeffi}
	a_{ij}=(-1)^{i-j-1}2^{2j-i}C_{j-1}^{i-j-1}-(-1)^{i-j}2^{2j-i-1}C_{j-1}^{i-j},
	\end{equation} 
	where $C_n^m$ are binomial coefficients for $0\leq m\leq n$ and $0$ otherwise.
\end{lemma}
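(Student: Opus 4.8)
The plan is to prove the closed form \eqref{coeffi} by induction, using the recursion $a_{ij}=\sum_{l=1}^{i-1}a_{l,j-1}-\sum_{l=1}^{i-1}a_{l,j}+a_{i-1,j-1}$ as the engine. Before doing the induction it is cleaner to rewrite the recursion in a telescoped form. Define $a_{ij}$ for all $i,j$ (with the convention that $a_{ij}=0$ outside the stated initial ranges) and set $s_{ij}:=\sum_{l=1}^{i} a_{l,j}$ for the column partial sums. Then the recursion reads $a_{ij}=s_{i-1,j-1}-s_{i-1,j}+a_{i-1,j-1}$, and subtracting the same identity with $i$ replaced by $i-1$ gives a ``second difference'' relation
\begin{equation}\label{eq:second-diff}
a_{ij}-a_{i-1,j}=a_{i-1,j-1}-a_{i-2,j-1}+a_{i-1,j-1}-a_{i-2,j-1}=2\bigl(a_{i-1,j-1}-a_{i-2,j-1}\bigr),
\end{equation}
valid for $i,j$ large enough that none of the boundary exceptions interfere. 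Wait --- one must be careful: the clean relation is $(a_{ij}-a_{i-1,j})=2a_{i-1,j-1}-2a_{i-2,j-1}$ only after correctly accounting for the $+a_{i-1,j-1}$ term; I would first nail down exactly this reduced recurrence and its range of validity by direct substitution, treating the small cases ($i\le 3$ or $j\le 2$, and the first off-diagonal band $i=j$, $i=j+1$) separately against the table.

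Next I would verify the base cases of the closed form directly: for $i=j$ the formula gives $a_{jj}=(-1)^{-1}2^{j}C_{j-1}^{-1}-(-1)^{0}2^{j-1}C_{j-1}^{0}=-2^{j-1}$, matching the main diagonal $-1,-2,-4,-8,-16,\dots$; for $i=j+1$ it gives $a_{j+1,j}=(-1)^{0}2^{j-1}C_{j-1}^{0}-(-1)^{1}2^{j-2}C_{j-1}^{1}=2^{j-1}+(j-1)2^{j-2}=(j+1)2^{j-2}$, matching $1,3,8,20,48,\dots$ (indices shifted). Also the formula correctly vanishes when $i>2j$ since then both $C_{j-1}^{i-j-1}$ and $C_{j-1}^{i-j}$ are zero, consistent with the triangular support visible in the table, and it must be checked against the three explicit boundary stipulations $a_{11}=-1$, $a_{21}=1$, $a_{i1}=0\ (i>2)$, $a_{1j}=0\ (j>1)$ --- these are outside the ``$i>1,j>1$'' range of the lemma but are needed to seed everything.

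The inductive step is then to show that the claimed closed form satisfies \eqref{eq:second-diff}. Plugging in, this reduces to the binomial identity
\begin{equation}\label{eq:pascal}
\Bigl[(-1)^{i-j-1}2^{2j-i}C_{j-1}^{i-j-1}-(-1)^{i-j}2^{2j-i-1}C_{j-1}^{i-j}\Bigr]-\Bigl[\text{same with }i\to i-1\Bigr]=2\Bigl[\text{same with }i\to i-1,\ j\to j-1\Bigr]-2\Bigl[\text{same with }i\to i-2,\ j\to j-1\Bigr],
\end{equation}
which after collecting powers of $2$ and signs collapses to repeated applications of Pascal's rule $C_{j-1}^{m}=C_{j-2}^{m}+C_{j-2}^{m-1}$ together with $C_{j-1}^{m}-C_{j-1}^{m-1}$ bookkeeping; I expect every term to cancel in pairs. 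This is the main obstacle: not that it is deep, but that the sign pattern $(-1)^{i-j-1}$ versus $(-1)^{i-j}$ and the two different powers $2^{2j-i}$, $2^{2j-i-1}$ have to be tracked with no slips, and one must confirm that the induction is being run over the right partial order (increasing $i-j$, or increasing $j$ with $i-j$ fixed) so that \eqref{eq:second-diff} together with the two base bands $i=j$ and $i=j+1$ genuinely determines all entries in the region $i>1,j>1$. Once the recurrence, the two base bands, and identity \eqref{eq:pascal} are checked, the lemma follows, and in particular the coefficient formula quoted in Theorem~\ref{main} is justified.
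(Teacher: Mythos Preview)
Your strategy---collapse the defining recursion to a short two-term recurrence, then verify that the closed form satisfies it via Pascal's rule---is exactly the paper's approach. But the reduced recurrence you wrote down in \eqref{eq:second-diff} is wrong, and since your verification \eqref{eq:pascal} is built on it, the plan as stated does not go through.

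Here is where the algebra slips. With $s_{ij}=\sum_{l\le i}a_{lj}$ you have $a_{ij}=s_{i-1,j-1}-s_{i-1,j}+a_{i-1,j-1}$ and $a_{i-1,j}=s_{i-2,j-1}-s_{i-2,j}+a_{i-2,j-1}$. Subtracting, the partial-sum differences give $a_{i-1,j-1}$ and $a_{i-1,j}$ (not $a_{i-2,j-1}$), so
\[
a_{ij}-a_{i-1,j}=a_{i-1,j-1}-a_{i-1,j}+a_{i-1,j-1}-a_{i-2,j-1},
\]
and the $a_{i-1,j}$ on the right cancels the one on the left. What survives is the clean relation
\[
a_{ij}=2a_{i-1,j-1}-a_{i-2,j-1}\qquad(i>2,\ j>1),
\]
which is precisely the recurrence the paper derives. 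Your version $a_{ij}-a_{i-1,j}=2(a_{i-1,j-1}-a_{i-2,j-1})$ already fails numerically: $a_{53}-a_{43}=-13$ while $2(a_{42}-a_{32})=-8$.

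Once you use the correct recurrence, the rest of your plan is fine and matches the paper: plug the closed form into $a_{ij}=2a_{i-1,j-1}-a_{i-2,j-1}$, and Pascal's identity $C_{j-1}^{m}=C_{j-2}^{m}+C_{j-2}^{m-1}$ makes both sides agree. Note also that with this recurrence the natural base is column $j=1$ (the closed form happens to reproduce $a_{11}=-1$, $a_{21}=1$, $a_{i1}=0$ for $i>2$ as well), rather than the diagonal bands $i=j$ and $i=j+1$; the paper just checks row $i=2$ and then appeals to uniqueness of the recursively defined $a_{ij}$.
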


\begin{proof}
	By assumption and direct computation, we have $a_{22}=a_{11}-a_{12}+a_{11}=-2$ and $a_{2j}=0$ for $j>2$.
	For $i>2,j>1$, we compute
	\begin{align*}
	a_{ij} &=\sum\limits_{l=1}^{i-1}a_{l,j-1}-\sum\limits_{l=1}^{i-1}a_{l,j}+a_{i-1,j-1}
	\\ &=\sum\limits_{l=1}^{i-1}a_{l,j-1}-\sum\limits_{l=1}^{i-2}a_{l,j}-(\sum\limits_{l=1}^{i-2}a_{l,j-1}-\sum\limits_{l=1}^{i-2}a_{l,j}+a_{i-2,j-1})+a_{i-1,j-1}
	\\ &=2a_{i-1,j-1}-a_{i-2,j-1}.
	\end{align*}
	Plugging (\ref{coeffi}) into the simplified recursive formula above, we verify 
	 \begin{align*}
	 2a_{i-1,j-1}-a_{i-2,j-1} &=2\big((-1)^{i-j-1}2^{2j-i-1}C_{j-2}^{i-j-1}-(-1)^{i-j}2^{2j-i-2}C_{j-2}^{i-j}\big)\\
	 &\ \ \ -\big((-1)^{i-j-2}2^{2j-i}C_{j-2}^{i-j-2}-(-1)^{i-j-1}2^{2j-i-1}C_{j-2}^{i-j-1}\big)
	 \\ &=\big((-1)^{i-j-1}2^{2j-i}C_{j-2}^{i-j-1}-(-1)^{i-j-2}2^{2j-i}C_{j-2}^{i-j-2}\big)\\
	 &\ \ \ -\big((-1)^{i-j}2^{2j-i-1}C_{j-2}^{i-j}-(-1)^{i-j-1}2^{2j-i-1}C_{j-2}^{i-j-1}\big)
	 \\ &=(-1)^{i-j-1}2^{2j-i}C_{j-1}^{i-j-1}-(-1)^{i-j}2^{2j-i-1}C_{j-1}^{i-j}\\ &=a_{i,j}.
	 \end{align*}
	 
	 It competes the proof, since $a_{ij}$ are well defined uniquely.
\end{proof}

\begin{lemma}\label{neg1}
	For each $n\in \Z^+$, $\sum\limits_{j=1}^\infty a_{nj}=-1, a_{nn}=-2^{n-1}$ and $a_{nl}=0$ for $l>n$.	
\end{lemma}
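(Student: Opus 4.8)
The plan is to prove the three assertions in the order $a_{nl}=0$ for $l>n$, then $a_{nn}=-2^{n-1}$, then $\sum_{j}a_{nj}=-1$, since the first makes the apparently infinite sum finite and the other two supply the base cases of the last. The vanishing is immediate: for $n=1$ it is the defining stipulation $a_{1j}=0$ ($j>1$), and for $n\ge 2$ we have $l\ge n+1\ge 3>1$, so Lemma \ref{coeff} applies with $i=n,\ j=l$, and both binomial coefficients $C_{l-1}^{n-l-1},\ C_{l-1}^{n-l}$ have upper index $n-l\le -1<0$, hence vanish; thus $S_n:=\sum_{j=1}^{\infty}a_{nj}$ is really the finite sum $\sum_{j=1}^{n}a_{nj}$. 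The diagonal is equally quick: $a_{11}=-1=-2^{0}$ by definition, and for $n\ge 2$, putting $i=j=n$ in (\ref{coeffi}) kills the first term (its coefficient is $C_{n-1}^{-1}=0$) and leaves $a_{nn}=-(-1)^{0}2^{n-1}C_{n-1}^{0}=-2^{n-1}$.

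For the row sum, one checks directly that $S_1=a_{11}=-1$ and $S_2=a_{21}+a_{22}=1-2=-1$. For $n\ge 3$ I will sum over $j$ the simplified recursion $a_{ij}=2a_{i-1,j-1}-a_{i-2,j-1}$ (valid for $i>2,\ j>1$) that was derived inside the proof of Lemma \ref{coeff}. Since $a_{n1}=0$ for $n\ge 3$, after the index shift $j\mapsto j-1$ this gives
\[
S_n=\sum_{j=2}^{n}\bigl(2a_{n-1,j-1}-a_{n-2,j-1}\bigr)=2\sum_{j=1}^{n-1}a_{n-1,j}-\sum_{j=1}^{n-1}a_{n-2,j}=2S_{n-1}-S_{n-2},
\]
where the last step uses the vanishing proved above ($a_{n-1,j}=0$ for $j\ge n$ and $a_{n-2,j}=0$ for $j\ge n-1$, so the two truncated sums equal $S_{n-1}$ and $S_{n-2}$). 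Hence $S_n=2S_{n-1}-S_{n-2}$ for all $n\ge 3$; the constant sequence $-1$ solves this linear recursion and matches $S_1=S_2=-1$, so by induction $S_n=-1$ for every $n\in\Z^+$.

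The computations are routine; the only point needing care is the index bookkeeping in the last step --- applying the simplified recursion only within its range $j\ge 2$, carrying out the shift $j\mapsto j-1$ correctly, and checking that the boundary terms (such as $a_{n-2,n-1}$) really vanish so the truncated sums are exactly $S_{n-1}$ and $S_{n-2}$ --- together with remembering to treat $n=1,2$ separately, since (\ref{coeffi}) and the simplified recursion are only stated for $i,j>1$ (respectively $i>2$). A variant that stays entirely within the closed form: note that (\ref{coeffi}) reads $a_{ij}=b_{i-1,j}-b_{i,j}$ for $b_{ij}:=(-1)^{i-j}2^{2j-i-1}C_{j-1}^{i-j}$, that $\sum_{j\ge 1}b_{ij}=i$ by the Fibonacci-polynomial evaluation $\sum_{m}C_{i-1-m}^{m}(-\tfrac14)^{m}=i\cdot 2^{-(i-1)}$, and telescope to get $\sum_{j}a_{nj}=(n-1)-n=-1$; I would present the recursion argument as the main one.
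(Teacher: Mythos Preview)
Your proof is correct. The paper's own proof is the single sentence ``The lemma can be easily proved by induction,'' so there is nothing detailed to compare against; your argument---reading off the vanishing and the diagonal value directly from the closed form~(\ref{coeffi}), and then summing the simplified recursion $a_{ij}=2a_{i-1,j-1}-a_{i-2,j-1}$ (derived in the proof of Lemma~\ref{coeff}) to get $S_n=2S_{n-1}-S_{n-2}$---is a clean and complete way to supply exactly the induction the paper leaves to the reader.
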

The lemma can be easily proved by induction.

\begin{lemma}\label{aij}
	For each $m\leq n\in \Z^+$, $\sum\limits_{j=m}^{n}a_{nj}+\sum\limits_{j=m}^{n}a_{jm}=\sum\limits_{j=m+1}^{n+1}a_{n+1,j}$.
\end{lemma}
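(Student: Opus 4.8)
The plan is to establish the identity by expanding the right-hand side $\sum_{j=m+1}^{n+1}a_{n+1,j}$ via the defining recursion and then reorganizing the result into the two sums appearing on the left. Since $m\in\Z^+$ we have $m\ge 1$, so every index $j$ in the range $m+1\le j\le n+1$ satisfies $j>1$, and $n+1>1$ as well; hence the recursion $a_{ij}=\sum_{l=1}^{i-1}a_{l,j-1}-\sum_{l=1}^{i-1}a_{l,j}+a_{i-1,j-1}$ is legitimately applicable to every term $a_{n+1,j}$ on the right. This point about the valid range is the one place where the hypothesis $m\ge 1$ is used, and it is worth checking carefully before proceeding.

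Next I would substitute, interchange the order of the double summations, and shift the summation index $j\mapsto j-1$ in the terms coming from the first inner sum and from $a_{i-1,j-1}$. Concretely,
\begin{align*}
\sum_{j=m+1}^{n+1}a_{n+1,j}
&=\sum_{j=m+1}^{n+1}\Big(\sum_{l=1}^{n}a_{l,j-1}-\sum_{l=1}^{n}a_{l,j}+a_{n,j-1}\Big)\\
&=\sum_{l=1}^{n}\Big(\sum_{j=m}^{n}a_{l,j}-\sum_{j=m+1}^{n+1}a_{l,j}\Big)+\sum_{j=m}^{n}a_{n,j}\\
&=\sum_{l=1}^{n}\big(a_{l,m}-a_{l,n+1}\big)+\sum_{j=m}^{n}a_{nj},
\end{align*}
where the last equality is the telescoping of each inner column sum $\sum_{j=m}^{n}a_{l,j}-\sum_{j=m+1}^{n+1}a_{l,j}=a_{l,m}-a_{l,n+1}$.

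To finish I would invoke Lemma \ref{neg1} twice. First, $a_{l,n+1}=0$ for every $l\le n$ (entries with column index exceeding row index vanish), so $\sum_{l=1}^{n}a_{l,n+1}=0$. Second, $a_{l,m}=0$ for $l<m$, so $\sum_{l=1}^{n}a_{l,m}=\sum_{l=m}^{n}a_{l,m}=\sum_{j=m}^{n}a_{jm}$. Plugging these into the display gives $\sum_{j=m+1}^{n+1}a_{n+1,j}=\sum_{j=m}^{n}a_{jm}+\sum_{j=m}^{n}a_{nj}$, which is exactly the claim.

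The argument is essentially bookkeeping with finite sums, so I do not expect a genuine obstacle; the only things that need attention are the applicability of the recursion over the full index range $m+1\le j\le n+1$ and placing the two vanishing facts from Lemma \ref{neg1} in the right spots. An alternative would be a double induction on $(m,n)$ using the simplified recursion $a_{ij}=2a_{i-1,j-1}-a_{i-2,j-1}$ derived in the proof of Lemma \ref{coeff}, but that forces one to handle the small cases $i\le 2$ separately and seems messier than the direct expansion above.
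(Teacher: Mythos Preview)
Your proof is correct and follows essentially the same route as the paper: expand $a_{n+1,j}$ via the defining recursion, telescope the resulting double sum, and then invoke the vanishing facts $a_{l,n+1}=0$ for $l\le n$ and $a_{l,m}=0$ for $l<m$ from Lemma \ref{neg1}. Your write-up is in fact slightly more explicit than the paper's about where Lemma \ref{neg1} enters.
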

\begin{proof}
	\begin{align*}
	\sum\limits_{j=m+1}^{n+1}a_{n+1,j}&= \sum\limits_{j=m+1}^{n+1}\big(\sum\limits_{l=1}^{n}a_{l,j-1}-\sum\limits_{l=1}^{n}a_{l,j}+a_{n,j-1}\big)\\
	&= \sum\limits_{l=1}^na_{lm}+\sum\limits_{j=m+2}^{n+1}\sum\limits_{l=1}^{n}a_{l,j-1}-\sum\limits_{j=m+1}^{n}\sum\limits_{l=1}^{n}a_{l,j}+\sum\limits_{j=m+1}^{n+1}a_{n,j-1}\\
	&=\sum\limits_{l=1}^m0+\sum\limits_{l=m}^na_{lm}+\sum\limits_{j=m+1}^{n+1}a_{n,j-1}=\sum\limits_{j=m}^{n}a_{jm}+\sum\limits_{j=m}^{n}a_{nj}.
	\end{align*}
\end{proof}

\section{proof of the main theorem}

The following facts about step-2 nilpotent groups are straightforward and heavily used in the following calculations. 
\begin{lemma}
 $[g,h][h,g]=1,[g,h^m][g,h^n]=[g,h^{m+n}],g,h\in G,m,n\in \Z.$
\end{lemma}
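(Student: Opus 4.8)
The plan is to reduce everything to the single structural fact that, for a group $G$ of nilpotency class two, the commutator subgroup $[G,G]$ is contained in the center $Z(G)$ (this is precisely the statement $[[G,G],G]=1$), and then to combine this with the universal commutator identities
\begin{equation*}
[ab,c]=[a,c]^b[b,c],\qquad [a,bc]=[a,c][a,b]^c,
\end{equation*}
which hold in any group by direct expansion of $[g,h]=g^{-1}h^{-1}gh$. The first assertion $[g,h][h,g]=1$ in fact needs no hypothesis: expanding gives $[g,h]^{-1}=(g^{-1}h^{-1}gh)^{-1}=h^{-1}g^{-1}hg=[h,g]$, so the product telescopes to $[g,h][g,h]^{-1}=1$.

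For the second assertion I would first establish $[g,h^n]=[g,h]^n$ for all $n\in\Z$, from which the claim is immediate. Since every commutator is central, conjugation acts trivially on it, so $[a,bc]=[a,c][a,b]^c$ collapses to $[a,bc]=[a,c][a,b]$. Applying this with $a=g$, $b=h^m$, $c=h$ yields the recursion $[g,h^{m+1}]=[g,h][g,h^m]$, and an induction on $m\geq 0$ starting from $[g,h^0]=[g,1]=1$ gives $[g,h^m]=[g,h]^m$. For negative exponents I would use $1=[g,hh^{-1}]=[g,h^{-1}][g,h]^{h^{-1}}=[g,h^{-1}][g,h]$, forcing $[g,h^{-1}]=[g,h]^{-1}$, hence $[g,h^{-m}]=[g,(h^{-1})^m]=[g,h]^{-m}$. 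Thus $[g,h^n]=[g,h]^n$ for every integer $n$, and therefore $[g,h^m][g,h^n]=[g,h]^m[g,h]^n=[g,h]^{m+n}=[g,h^{m+n}]$.

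There is essentially no genuine obstacle here; the argument is routine bookkeeping with the definitions. The only points that deserve a moment's attention are invoking the centrality of $[G,G]$ at exactly the right spots — namely to delete the conjugating superscripts in the two-variable identities — and treating the negative-exponent case separately from the non-negative one so that the induction is clean.
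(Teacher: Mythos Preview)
Your argument is correct and is the standard derivation of these identities from the single hypothesis $[G,G]\subseteq Z(G)$. The paper itself gives no proof of this lemma at all: it simply introduces the statement as ``straightforward'' facts about step-2 nilpotent groups and moves on, so there is nothing to compare against beyond noting that you have supplied precisely the routine verification the author chose to omit.
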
	

We show the relations first.

\begin{lemma}\label{relation}
$[ x^ngx^{-n}, \ h]=\prod\limits_{j=1}^{n}x^j[g\inv,h^{a_{nj}}]x^{-j},n\in\Z^+,g,h\in G.$
\end{lemma}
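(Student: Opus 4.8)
The plan is to verify the identity by evaluating both sides in wreath-product coordinates under the faithful embedding $\GG(\CC)\hookrightarrow G\wr\GG(\CC)$ coming from the action on the rooted $k$-tree, arguing by induction on $n$ (in strong form). Write $L_n:=[x^ngx^{-n},h]$ and $R_n:=\prod_{j=1}^{n}x^j[g\inv,h^{a_{nj}}]x^{-j}$. Since each factor $x^j[g\inv,h^{a}]x^{-j}$ is central in $\GG(\CC)$, the factors of $R_n$ commute and $R_n$ is itself central; together with the inductive hypothesis this is what lets the commutator identities below collapse. Two preliminary computations set the stage: (i) for $c$ central in $G$ one checks, by an easy induction on $m$ using $\CC_g=x\inv g$ and $\CC\inv_g=g\inv x$, that $x^mcx^{-m}=c\bigl(t,\dots,t\bigr)$, i.e.\ the first-level permutation is left translation by $c$ and every first-level section equals $t=\prod_{l=0}^{m-1}x^lc\inv x^{-l}$; (ii) feeding $c=[g\inv,h^{a_{nj}}]$ into (i), regrouping the resulting double product by powers of $x$, and using $\sum_{j}a_{nj}=-1$ (Lemma \ref{neg1}), one gets $R_n=[g,h]\bigl(r,\dots,r\bigr)$ with $r=\prod_{l=0}^{n-1}x^l[g\inv,h^{-\sum_{j>l}a_{nj}}]x^{-l}$.

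The base case $n=1$ is a direct wreath-coordinate computation: $xgx\inv=g\bigl(g\inv[g\inv,g_1],\dots,g\inv[g\inv,g_k]\bigr)$, and expanding $[xgx\inv,h]$ and cancelling all the $[g\inv,g_i]$ via centrality of commutators of $G$ gives $[xgx\inv,h]=x[g,h]x\inv=x[g\inv,h^{-1}]x\inv=R_1$, the last two equalities using $[g,h]=[g\inv,h^{-1}]$ and $a_{11}=-1$.

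For the inductive step, set $y=x^ngx^{-n}$, so $x^{n+1}gx^{-(n+1)}=xyx\inv$ and hence $[xyx\inv,h]=x[y,\,x\inv hx]x\inv$. Using $x\inv hx=h[h,x]$, the identity $[y,bc]=[y,c][y,b]^{c}$, and the inductive hypothesis $[y,h]=L_n$ (central), one obtains
\[
L_{n+1}=x[y,[h,x]]x\inv\cdot xL_nx\inv,\qquad xL_nx\inv=\prod_{j=1}^{n}x^{j+1}[g\inv,h^{a_{nj}}]x^{-(j+1)} .
\]
The remaining factor $x[y,[h,x]]x\inv$ carries the new $x^1$-level contribution; one computes $[h,x]=1\bigl(x\inv hx,\dots,x\inv hx\bigr)$, whence $[y,[h,x]]=1\bigl([s_1,x\inv hx],\dots,[s_k,x\inv hx]\bigr)$, where $s_i$ is the first-level section of $y$ at $g_i$, and $[s_i,x\inv hx]=x\inv[x s_ix\inv,h]x$. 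Since $s_i$ (hence $xs_ix\inv$) is a product of conjugates $x^l(\cdot)x^{-l}$ with $l\le n$, expanding $[xs_ix\inv,h]$ by repeated commutator identities and invoking the relation for all indices $\le n$ (inductive hypothesis) rewrites $x[y,[h,x]]x\inv$ as a product of conjugates $x^l[g\inv,h^{?}]x^{-l}$.

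Combining the two factors, $L_{n+1}$ takes the form $\prod_{l}x^l[g\inv,h^{e_l}]x^{-l}$ for explicit integers $e_l$, and the desired equality $L_{n+1}=R_{n+1}$ reduces to matching these exponents with those of $R_{n+1}$ — i.e.\ to an arithmetic identity among the $a_{ij}$, which is exactly what Lemmas \ref{coeff}, \ref{neg1} and \ref{aij} supply; in particular the summation identity $\sum_{j=m}^{n}a_{nj}+\sum_{j=m}^{n}a_{jm}=\sum_{j=m+1}^{n+1}a_{n+1,j}$ of Lemma \ref{aij} is the arithmetic that makes the reindexed double sums telescope into the coefficients of $R_{n+1}$. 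The main obstacle is precisely this bookkeeping in the inductive step: tracking, through the conjugation by $x$ and the commutator expansions, which commutator $[g\inv,h^{a_{nj}}]$ gets conjugated by which power of $x$ and with what multiplicity, and organizing the resulting sums so that Lemma \ref{aij} applies. The centrality assertions used to collapse commutators are a lesser point, following from the stated centrality of every $x^j[g\inv,h^{a}]x^{-j}$ together with the inductive hypothesis.
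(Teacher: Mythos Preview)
Your overall strategy is sound and genuinely different from the paper's. The paper proves the inductive step by a long direct manipulation of the Cayley-machine sections: it writes $\CC^{-(m+1)}_{ghf}\CC^{m+1}_{hf}=\CC^{-1}_{ghf}\bigl(\CC^{-m}_{ghf}\CC^{m}_{hf}\bigr)\CC_{hf}$, substitutes the inductive hypothesis for the middle factor, rewrites every $\CC^{\pm 1}$ as $x^{-1}g$ or $g^{-1}x$, and then commutes the stray $h$ all the way to the left, invoking the hypothesis again at each step; Lemma~\ref{aij} is what makes the accumulated exponents collapse. Your route instead isolates the new content of level $n+1$ algebraically via
\[
L_{n+1}=x\,[y,[h,x]]\,x^{-1}\cdot xL_nx^{-1},\qquad y=x^ngx^{-n},
\]
and only drops into wreath coordinates to analyse $[y,[h,x]]$. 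This is more conceptual and explains \emph{why} the recursion among the $a_{ij}$ appears: indeed, if one carries your computation through, the exponent identity one must check at level $j$ is
\[
a_{n,j-1}+\sum_{l=j-1}^{n}a_{l,j-1}-\sum_{l=j}^{n}a_{lj}=a_{n+1,j},
\]
which is exactly the defining recursion for $a_{ij}$ (equivalently, the difference of two instances of Lemma~\ref{aij}).

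That said, two points in your write-up are genuine gaps rather than routine bookkeeping. First, the assertion that each $x^j[g^{-1},h^{a}]x^{-j}$ is \emph{central in $\GG(\CC)$} is false: it does not commute with $x$, since $x\cdot x^jcx^{-j}\cdot x^{-1}=x^{j+1}cx^{-(j+1)}\neq x^jcx^{-j}$ for $c\neq 1$. What is true, and what your argument actually needs, is that such elements are central in $N=\langle x^nGx^{-n}\rangle$; this already suffices because $[h,x]\in N$ and every $A_l=x^la_lx^{-l}$ you commute past lies in $N$. But centrality in $N$ is not free: you must show $[c,\,x^ma'x^{-m}]=1$ for $c\in Z(G)$, $a'\in G$, $m\ge 0$, and this requires its own (easy) induction on $m$ via your preliminary~(i), carried out \emph{before} the main induction. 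Second, you pass from ``$[y,[h,x]]=1\bigl([s_i,x^{-1}hx]\bigr)_i$ in wreath coordinates'' to ``$x[y,[h,x]]x^{-1}$ is a product $\prod_l x^l[g^{-1},h^{e_l}]x^{-l}$'' without explanation. Two things are hidden here: (a) one must check that the section $[s_i,x^{-1}hx]$ is \emph{independent of $i$} --- this comes out of expanding $xs_ix^{-1}$ as a product of conjugates of $(g_ig)^{-1}$ and $g_i$, applying the hypothesis, and observing that in a class-two group the $g_i$-contributions cancel via $[g_ig,h^a][g_i^{-1},h^a]=[g,h^a]$; and (b) one must invert preliminary~(i), i.e.\ recover a product $\prod_m x^mc_mx^{-m}$ from the datum $1(v,\dots,v)$, which amounts to the telescoping $c_m=d_md_{m-1}^{-1}$ where $v=\prod_l x^ld_lx^{-l}$. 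Alternatively, and more cleanly, you can bypass (b) entirely by comparing the first-level sections of $L_{n+1}$ and $R_{n+1}$ directly using your preliminary~(ii); then the equality reduces immediately to Lemma~\ref{aij} in the form $\sum_{j=m}^{n}a_{nj}+\sum_{j=m}^{n}a_{jm}=\sum_{j=m+1}^{n+1}a_{n+1,j}$.
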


\begin{proof} 
	The proof is by induction on n.
	
	The case of $n=1$: It suffices to show $[xgx\inv, h]=x[g\inv,h\inv]x\inv.$ Indeed, in wreath product coordinates, we have
	\begin{align*}
	x g x\inv h &= g\Big(\CC\inv_{gg_1}\CC_{g_1},\  \CC\inv_{gg_2}\CC_{g_2},\ \cdots,\  \CC\inv_{gg_k}\CC_{g_k}\Big)h(1,\cdots,1)
	\\ &= gh\Big(\CC\inv_{ghg_1}\CC_{hg_1},\ \cdots,\  \CC\inv_{ghg_k}\CC_{hg_k}\Big)\\
	 &=gh\Big(g_1\inv h\inv g\inv hg_1,\cdots,g_k\inv h\inv g\inv hg_k\Big)\\
	 &=gh\Big(\CC\inv_{h\inv ghg_1}\CC_{g_1},\cdots,\  \CC\inv_{h\inv ghg_k}\CC_{g_k}\Big)\\
	 &=hxh\inv ghx\inv=h x g x\inv x[g\inv,h\inv]x\inv.
	\end{align*}
	
	Inductive hypothesis: we assume the lemma is true for all $n\leq m$. 
	
	In particular we have 
	$$ x^mgx^{-m}h=hx^mgx^{-m}\prod\limits_{j=1}^{m}x^j[g\inv,h^{a_{mj}}]x^{-j}.$$

	In wreath product coordinates,  $$x^mgx^{-m}h=gh\Big(\CC_{ghg_1}^{-m}\CC_{hg_1}^m,\cdots,\CC_{ghg_k}^{-m}\CC_{hg_k}^m\Big),$$
	
	and, applying Lemma \ref{neg1},
	
	\begin{align*}
	&hx^mgx^{-m}\prod\limits_{j=1}^{m}x^j[g\inv,h^{a_{mj}}]x^{-j}=h(\prod\limits_{j=1}^{m}x^j[g\inv,h^{a_{mj}}]x^{-j})x^mgx^{-m}\\
	&=h\prod\limits_{j=1}^{m}[g\inv,h^{a_{mj}}]g\Big(\CC\inv_{\prod_{j=1}^{m}[g\inv,h^{a_{mj}}]gg_1}\CC^1_{\prod_{j=2}^{m}[g\inv,h^{a_{mj}}]gg_1}\CC^{-2}_{\prod_{j=2}^{m}[g\inv,h^{a_{mj}}]gg_1}\CC^2_{\prod_{j=3}^{m}[g\inv,h^{a_{mj}}]gg_1}\\
	&\ \ \ \ \cdots\CC^{-m}_{\prod_{j=m}^{m}[g\inv,h^{a_{mj}}]gg_1}\CC^m_{g_1},\cdots\Big)\\
	&=gh\Big(\CC\inv_{\prod_{j=1}^{m}[g\inv,h^{a_{mj}}]gg_1}\CC^{-1}_{\prod_{j=2}^{m}[g\inv,h^{a_{mj}}]gg_1}\CC^{-1}_{\prod_{j=3}^{m}[g\inv,h^{a_{mj}}]gg_1} \cdots\CC^{-1}_{\prod_{j=m}^{m}[g\inv,h^{a_{mj}}]gg_1}\CC^m_{g_1},\cdots\Big).
	\end{align*}
	
	Then for any $f\in G$, we have 
	$$\CC_{ghf}^{-m}\CC_{hf}^m=\CC\inv_{\prod_{j=1}^{m}[g\inv,h^{a_{mj}}]gf}\CC^{-1}_{\prod_{j=2}^{m}[g\inv,h^{a_{mj}}]gf} \cdots\CC^{-1}_{\prod_{j=m}^{m}[g\inv,h^{a_{mj}}]gf}\CC^m_{f}.$$
	
	The case of $n=m+1$: we consider wreath product coordinates.
	
	\begin{align*}
	&\CC_{ghf}^{-m-1}\CC_{hf}^{m+1}=\CC\inv_{ghf}\CC_{ghf}^{-m}\CC_{hf}^{m}\CC_{hf}^{1}\\
	&=\CC\inv_{ghf}\CC\inv_{\prod_{j=1}^{m}[g\inv,h^{a_{mj}}]gf}\CC\inv_{\prod_{j=2}^{m}[g\inv,h^{a_{mj}}]gf}\cdots\CC^{-1}_{\prod_{j=m}^{m}[g\inv,h^{a_{mj}}]gf}\CC^m_{f}\CC_{hf}^{1}\\
	&= (ghf)^{\inv} x f\inv g\inv \prod_{j=1}^{m}[g,h^{a_{mj}}]x f\inv g\inv \prod_{j=2}^{m}[g,h^{a_{mj}}]x  \cdots f\inv g\inv [g,h^{a_{mm}}]x(x\inv f)^m x\inv hf\\
	&=(ghf)^{-1}x f\inv g\inv \prod_{j=1}^{m}[g,h^{a_{mi}}]x\inv x^2 f\inv g\inv \prod_{j=2}^{m}[g,h^{a_{mj}}] x^{-2} \cdots x^mf\inv g\inv [g,h^{a_{mm}}]x^{-m}\\ 
	&\ \ \ \  x^m fx^{-m}\cdots x fx\inv  hf\\
	&=(ghf)^{-1}hx f\inv g\inv \prod_{j=1}^{m}[g,h^{a_{mj}}]x\inv x^2 f\inv g\inv \prod_{j=2}^{m}[g,h^{a_{mj}}] x^{-2} \cdots x^mf\inv g\inv [g,h^{a_{mm}}]x^{-m}\\ 
	&\ \ \ \  x^m fx^{-m}\cdots x fx\inv  f\cdot \prod\limits_{s=1}^{1}x^s[gf,h^{a_{1s}}]x^{-s} \prod\limits_{s=1}^{2}x^s[gf,h^{a_{2s}}]x^{-s}\cdots \prod\limits_{s=1}^{m}x^s[gf,h^{a_{ms}}]x^{-s}\\
	&\ \ \ \ \prod\limits_{s=1}^{m}x^s[f\inv,h^{a_{ms}}]x^{-s}\cdots \prod\limits_{s=1}^{1}x^s[f\inv,h^{a_{1s}}]x^{-s}\\
	&=(ghf)^{-1}hx f\inv g\inv \prod_{j=1}^{m}[g,h^{a_{mj}}]x\inv x^2 f\inv g\inv \prod_{j=2}^{m}[g,h^{a_{mj}}] x^{-2} \cdots x^mf\inv g\inv [g,h^{a_{mm}}]x^{-m}\\ 
	&\ \ \ \  x^m fx^{-m}\cdots x fx\inv  f\cdot \prod\limits_{s=1}^{1}x^s[g,h^{a_{1s}}]x^{-s} \prod\limits_{s=1}^{2}x^s[g,h^{a_{2s}}]x^{-s}\cdots \prod\limits_{s=1}^{m}x^s[g,h^{a_{ms}}]x^{-s}\\
	&=(ghf)^{-1}hx f\inv g\inv \prod_{j=1}^{m}[g,h^{a_{mj}}]\prod_{s=1}^{m}[g,h^{a_{s1}}]x\inv x^2 f\inv g\inv \prod_{j=2}^{m}[g,h^{a_{mj}}]\prod_{s=2}^{m}[g,h^{a_{s2}}] x^{-2} \cdots\\
	&\ \ \ \ x^mf\inv g\inv \prod_{j=m}^{m}[g,h^{a_{mj}}]\prod_{s=m}^{m}[g,h^{a_{sm}}]x^{-m} x^m fx^{-m}\cdots x fx\inv  f\\
	&=(gf)^{\inv}[g,h\inv]x(gf)^{\inv}\prod_{j=2}^{m+1}[g,h^{a_{m+1,j}}]x\inv x^2(gf)^{\inv}\prod_{j=3}^{m+1}[g,h^{a_{m+1,j}}]x^{-2}\cdots\\
	&\ \ \ \ x^m(gf)\inv \prod_{j=m+1}^{m+1}[g\inv,h^{a_{m+1,j}}]x^{-m} x^m fx^{-m}\cdots x fx\inv f\\
	&=f\inv g\inv \prod_{j=1}^{m+1}[g,h^{a_{m+1,j}}]x f\inv g\inv \prod_{j=2}^{m+1}[g,h^{a_{m+1,j}}]x  \cdots f\inv g\inv \prod_{j=m+1}^{m+1}[g,h^{a_{m+1,j}}]x(x\inv f)^{m+1}\\
	&=\CC\inv_{\prod_{j=1}^{m+1}[g\inv,h^{a_{m+1,j}}]gf}\CC\inv_{\prod_{j=2}^{m+1}[g\inv,h^{a_{m+1,j}}]gf}\cdots\CC^{-1}_{\prod_{j=m+1}^{m+1}[g\inv,h^{a_{m+1,j}}]gf}\CC^{m+1}_{f},
	\end{align*}
	
	where the second to the last equality is by Lemma \ref{neg1}, the third to the last equality is by Lemma \ref{aij}, and we commute the last $h$ to the far left in the fifth equality.
	
Therefore, 	$ x^{m+1}gx^{-m-1}h=hx^{m+1}gx^{-m-1}\prod\limits_{j=1}^{m+1}x^j[g\inv,h^{a_{m+1,i}}]x^{-j}$, and hence the case of $m+1$ is true. The lemma follows.
	
\end{proof}

\begin{lemma}\label{form}
	Given any non-trivial torsion element $\gamma\in\GG(\CC)$, it can be written uniquely in the form:
	\begin{equation}\label{order}
	\gamma= x^{i_1}f_1x^{-i_1}x^{i_2}f_2x^{-i_2}\dots x^{i_j}f_{j}x^{-i_j},
	\end{equation}
	where $i_1<i_2<\ldots <i_j$ are integers, $j\in \Z^+$ , and $f_1,f_2,\ldots,f_j\in G\setminus{\{g_1=1\}}$.
\end{lemma}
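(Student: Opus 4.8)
The plan is to use the semidirect product structure $\GG(\CC)=N\rtimes\langle x\rangle$ together with the fact that $N$ is locally finite, so that every torsion element in fact lies in $N$. First I would observe that if $\ga\in\GG(\CC)$ is a non-trivial torsion element, then writing $\ga=\nu x^m$ with $\nu\in N$ and $m\in\Z$ under the semidirect product decomposition, the image of $\ga$ in the quotient $\GG(\CC)/N\cong\langle x\rangle\cong\Z$ is torsion, hence trivial; thus $m=0$ and $\ga\in N$. Since $N=\langle x^nGx^{-n}\mid n\in\Z\rangle$, the element $\ga$ is a product of finitely many conjugates $x^{i}f x^{-i}$ with $f\in G$, and after discarding the $f=1$ terms we may collect, using that $N$ is generated by these pieces, an expression of the shape $\ga=x^{i_1}f_1x^{-i_1}\cdots x^{i_j}f_jx^{-i_j}$. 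So existence of \emph{some} factorization is essentially immediate.

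The substantive work is in arranging the exponents to be strictly increasing and in proving uniqueness. For the ordering, I would use the commutation relations of Lemma~\ref{relation}: the key point is that $[x^n g x^{-n},h]$ — and more generally the commutator of any two of the generators $x^{i}f x^{-i}$, $x^{i'}f'x^{-i'}$ — lies in the center of $\GG(\CC)$ (as noted after Theorem~\ref{main}, since $G$ is step-2 nilpotent all the conjugates appearing on the right-hand side are central). Hence $N$ modulo its center is abelian, and one can freely permute factors at the cost of introducing central correction terms; moreover those central terms are themselves products of conjugates $x^{s}[\,\cdot\,,\cdot\,]x^{-s}$ with $s$ controlled by the indices involved, so repeatedly sorting does not destroy the finiteness of the expression and lets us merge any two factors with the same exponent $i$ into a single $x^i f x^{-i}$ (using $x^i f x^{-i}\cdot x^i f' x^{-i}=x^i(ff')x^{-i}$ up to a central term, which can itself be absorbed into factors at higher indices). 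Iterating, one reaches the normal form \eqref{order} with $i_1<i_2<\cdots<i_j$ and all $f_\ell\neq 1$.

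For uniqueness I would argue with the depth function recalled in the excerpt: $x^n g x^{-n}$ has depth $n+1$ for $n\geq 0$, and this notion extends to give each element of $N$ a well-defined depth equal to the largest $n+1$ occurring among its nontrivial ``letters.'' Suppose $\ga$ has two normal forms; take the one with the largest top index, say $i_j$, and evaluate the action of $\ga$ on words in $G^\omega$ at coordinate $i_j+1$. Because all factors with index $<i_j$ act trivially on that coordinate (their depth is too small) while $x^{i_j}f_jx^{-i_j}$ contributes $f_j$ there, one reads off $f_j$ and $i_j$ from $\ga$ itself; then multiply by $(x^{i_j}f_jx^{-i_j})^{-1}$ and induct downward on the number of factors. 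A cleaner way to package the same idea: identify $N$ with a restricted direct sum indexed by $\Z$ of copies of (a central extension of) $G$ via the depth-graded pieces, in which $x$ acts by shift; the normal form is then literally the ``coordinate expansion'' of $\ga$ and uniqueness is automatic.

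The main obstacle I anticipate is \textbf{bookkeeping the central correction terms when sorting}: each time two factors are swapped or merged, a commutator is spawned, and one must check that these commutators can always be pushed to indices $\geq$ the ones currently being processed (so the sorting terminates) and that they do not reintroduce factors at already-finalized positions. This is where Lemmas~\ref{coeff}, \ref{neg1}, and especially \ref{aij} — which control exactly which shifted commutators $x^s[g^{-1},h^{a_{ns}}]x^{-s}$ appear — will be needed to certify that the indices of the correction terms stay in the expected range. Once that is in hand, uniqueness via depth/coordinate-projection is routine.
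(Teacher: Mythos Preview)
Your approach is correct and matches the paper's: existence via Lemma~\ref{relation} (sorting conjugates using that the resulting commutators are central), uniqueness via depth. Two simplifications are worth noting. First, merging at the same index is exact, $x^ifx^{-i}\cdot x^if'x^{-i}=x^i(ff')x^{-i}$, with no correction term. Second, your termination worry is lighter than you suggest: swapping factors at indices $i<i'$ produces, by Lemma~\ref{relation} (after conjugating by $x^i$), correction terms at indices $i+1,\ldots,i'$ only, so processing indices left-to-right terminates without invoking Lemmas~\ref{coeff}, \ref{neg1}, \ref{aij}. For uniqueness the paper is terser still: it shows that a normal form equal to $1$ forces all $f_\ell=1$, by conjugating so that $i_1=0$ (needed since depth is only finite for nonnegative exponents) and observing the depth would otherwise be $\ge 1$; your peel-off-the-top-and-induct argument is equivalent but longer.
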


\begin{proof}
	Since $\gamma$ is a product of conjugates by $x$ of elements in $G$, by Lemma \ref{relation}, $\gamma$ can be written in the form of \eqref{order}. So it suffices to show the uniqueness.
	
	Let $\gamma= x^{i_1}f_1x^{-i_1}x^{i_2}f_2x^{-i_2}\dots x^{i_j}f_{j}x^{-i_j}=1$, where $ i_1<i_2<\ldots <i_j$ and $f_1,f_2,\ldots,f_j\in G$. Then $1=x^{-i_1}\gamma x^{i_1}=f_1x^{i_2-i_1}f_2x^{-i_2+i_1}\dots x^{i_j-i_1}f_{j}x^{-i_j+i_1}$ has depth $i_j-i_1+1\geq 1$. Hence $f_i=1, 1\leq i\leq j$.
	It completes the proof.
\end{proof}

\begin{proof}[Proof of Theorem \ref{main} ]
	Note that $\GG(\CC)=N\rtimes\langle x\rangle$. Then the theorem follows from Lemma \ref{coeff}, \ref{relation} and \ref{form}.
\end{proof}

\begin{remark}
	Theorem 1.1 in \cite{Yang} is a corollary of our main theorem. One can easily check that for each $n$, there is one and only one odd element $a_{n,[\frac{n}{2}]}$, where $[\frac{n}{2}]$ is the least integer greater than or equal to $\frac{n}{2}.$
\end{remark}

\begin{proof}[Proof of Theorem \ref{cwl}]
	The proof is almost the same as the proof of theorem 1.4 in \cite{Yang}.
\end{proof}

\medskip
\noindent
Ning Yang\\
Department of Mathematics and Science\\
Nantong Normal College\\
Nantong, Jiangsu 226011, China\\
E-mail: ntningyang@126.com

\end{document}